\documentclass[11pt,reqno]{amsart}
\usepackage{amsmath,amssymb,amsthm}
\usepackage[a4paper,left=3cm,right=3cm,top=3cm,bottom=3cm]{geometry}
\usepackage{hyperref}

\theoremstyle{plain}
\newtheorem{thm}{Theorem}[section]
\newtheorem{lem}[thm]{Lemma}
\newtheorem{prop}[thm]{Proposition}
\newtheorem{coro}[thm]{Corollary}
\theoremstyle{definition}
\newtheorem{ex}[thm]{Example}
\newtheorem{rem}[thm]{Remark}
\newcommand{\Aut}{\operatorname{Aut}}
\newcommand{\GL}{\operatorname{GL}}
\newcommand{\Span}{\operatorname{Span}}
\newcommand{\rank}{\operatorname{rank}}
\newcommand{\im}{\operatorname{Im}}

\title[Isotropy Groups of Locally Finite Derivations]{On the Isotropy Groups of Locally Finite Derivations of Laurent Polynomial Algebras}
\author{Anderson Porto}
\address{Universidade Federal dos Vales do Jequitinhonha e Mucuri, Brasil}%
\email{ander.porto@ict.ufvjm.edu.br}

\author{Marcelo Veloso} 
\address{Universidade Federal de S\~ao Jo\~ao del-Rei, Brasil}
\email{veloso@ufsj.edu.br}

\keywords{Locally finite derivations, isotropy groups, Laurent polynomial algebras}

\subjclass[2020]{Primary 13N15; Secondary 16W20, 16S34}

\begin{document}
	\begin{abstract}
		Let \(A_n=\Bbbk[x_1^{\pm1},\ldots,x_n^{\pm1}]\) be a Laurent
		polynomial algebra over a field \(\Bbbk\) of characteristic zero.
		We determine explicitly the automorphisms commuting with each
		locally finite derivation
		\[
		D_\mu=\sum_{i=1}^n\mu_i x_i\frac{\partial}{\partial x_i},
		\qquad \mu=(\mu_1,\ldots,\mu_n)^T\in\Bbbk^n.
		\]
		These are precisely the automorphisms
		\(\rho(x_i)=\lambda_i x^{m_i}\), where
		\(\lambda_i\in\Bbbk^\ast\), the exponent vectors \(m_i\) are
		the columns of a matrix \(M_\rho\in\GL_n(\mathbb Z)\), and
		\(\mu^TM_\rho=\mu^T\).
		We also give an explicit parametrization in dimension two and relate
		these automorphisms to symmetries of diagonal flows on algebraic tori.
	\end{abstract}
	\maketitle
	
	\section*{Introduction}

	Locally finite derivations form an important class of derivations
	in commutative and noncommutative algebra. Recall that a derivation
	\(D\) of a \(\Bbbk\)-algebra \(A\) is called locally finite if, for
	every \(a\in A\), the \(\Bbbk\)-vector space
	\[
	\operatorname{Span}_{\Bbbk}\{D^m(a)\mid m\geq0\}
	\]
	is finite-dimensional. These derivations arise naturally in
	connection with polynomial flows, automorphism groups, and
	algebraic group actions.
	
	A fundamental result in the two-dimensional commutative setting
	is due to van den Essen \cite{V92}, who classified, up to
	conjugation, the locally finite derivations of the polynomial
	algebra \(\Bbbk[X,Y]\) over a field of characteristic zero.
	His classification provides a finite list of normal forms and
	has served as a starting point for investigations of the structure
	and symmetries of locally finite derivations.
	
	Given a derivation \(D\) of a \(\Bbbk\)-algebra \(A\), a natural
	problem is to determine its isotropy group
	\[
	\operatorname{Aut}_D(A)
	=
	\{\rho\in\operatorname{Aut}_{\Bbbk}(A)\mid \rho D=D\rho\}.
	\]
	This group consists of the automorphisms of \(A\) that preserve
	\(D\). In \cite{BFV}, the notion of the tame isotropy group of
	a derivation was introduced and studied for several classes of
	derivations of polynomial algebras, including triangular
	derivations, simple derivations in two variables, and simple
	Shamsuddin derivations. More recently, van den Essen's
	classification was used to obtain explicit descriptions of
	the isotropy groups of locally finite derivations of
	\(\Bbbk[X,Y]\) \cite{CidVeloso}.
	
	Locally finite derivations have also been investigated beyond
	the polynomial setting. In \cite{BBVS-LFD-Ore}, they were
	classified for Ore extensions of \(\Bbbk[x]\) over an
	algebraically closed field of characteristic zero, including
	the quantum plane, the first quantum Weyl algebra, and
	differential Ore extensions. This provides a noncommutative
	counterpart to van den Essen's classification in two variables.
	
	The purpose of the present paper is to determine explicitly
	the isotropy groups of locally finite derivations of the
	Laurent polynomial algebra
	\[
	A_n=\Bbbk[x_1^{\pm1},\ldots,x_n^{\pm1}],
	\]
	where \(\Bbbk\) is a field of characteristic zero and \(n\geq1\).
	This algebra is the coordinate ring of the split algebraic
	torus \(\mathbb G_m^n\). Its automorphisms have a monomial
	form, and its locally finite derivations admit a particularly
	simple description. Indeed, by Zhao
	\cite[Proposition~2.1]{Zhao}, every such derivation has the form
	\[
	D_\mu=\sum_{i=1}^n\mu_i x_i\frac{\partial}{\partial x_i},
	\qquad
	\mu=(\mu_1,\ldots,\mu_n)^T\in\Bbbk^n.
	\]
	Throughout the paper, weight vectors and exponent vectors
	are regarded as column vectors. In
	Proposition~\ref{isotropy-condition-An}, we show that an
	automorphism
	\[
	\rho(x_i)=\lambda_i x^{m_i},
	\qquad
	\lambda_i\in\Bbbk^\ast,
	\qquad
	M_\rho=(m_1\ \cdots\ m_n)\in\GL_n(\mathbb Z),
	\]
	commutes with \(D_\mu\) if and only if
	\[
	\mu^TM_\rho=\mu^T.
	\]
	This criterion also follows from the action of monomial
	automorphisms on Euler derivations described in
	\cite[Lemma~2.9]{Bavula16}. It reduces the isotropy problem
	to the determination of the unimodular integer matrices
	satisfying this condition.
	
	Our emphasis is on the explicit description of the commuting
	automorphisms. In dimension two, we give a parametrization
	of their exponents. In arbitrary dimension, we use the lattice
	of integer relations among the weights,
	\[
	L_\mu=\{\ell\in\mathbb Z^n:\mu^T\ell=0\},
	\]
	to write every commuting automorphism as
	\[
	\rho(x_i)=\lambda_i x^{e_i+\ell_i},
	\qquad
	\lambda_i\in\Bbbk^\ast,
	\qquad
	\ell_i\in L_\mu,
	\]
	where \(e_1,\ldots,e_n\) are the standard basis vectors of
	\(\mathbb Z^n\), subject to the condition
	\[
	(e_1+\ell_1\ \cdots\ e_n+\ell_n)\in\GL_n(\mathbb Z).
	\]
	Writing
	\[
	r=\dim_{\mathbb Q}
	\Span_{\mathbb Q}\{\mu_1,\ldots,\mu_n\},
	\]
	we choose an integral basis adapted to \(L_\mu\), in which
	the admissible exponent matrices are precisely
	\[
	\begin{pmatrix}
		I_r&0\\
		B&C
	\end{pmatrix},
	\qquad
	B\in M_{(n-r)\times r}(\mathbb Z),
	\quad
	C\in\GL_{n-r}(\mathbb Z).
	\]
	This yields a parametrization of all commuting automorphisms.
	The resulting abstract group structure is recorded in a
	remark following the classification.
	
	Finally, over \(\mathbb R\) or \(\mathbb C\), we interpret
	these automorphisms as algebraic symmetries of the diagonal
	flow associated with \(D_\mu\). The same lattice \(L_\mu\)
	also describes the Laurent monomial first integrals and
	the ring of constants of the derivation.
	\section{Automorphisms and Locally Finite Derivations of Laurent Polynomial Algebras}
	
	Throughout the paper, \(\Bbbk\) is a field of characteristic zero,
	\(n\geq1\), and
	\[
	A_n=\Bbbk[x_1^{\pm1},\ldots,x_n^{\pm1}].
	\]
	For a column vector \(\alpha=(\alpha_1,\ldots,\alpha_n)^T\in\mathbb Z^n\),
	we write \(x^\alpha=x_1^{\alpha_1}\cdots x_n^{\alpha_n}\).
	We denote by \(e_1,\ldots,e_n\) the standard basis vectors of
	\(\mathbb Z^n\), regarded as column vectors.
	
	\subsection{Automorphisms of Laurent polynomial algebras}
	
	The units of \(A_n\) are precisely the scalar multiples of Laurent
	monomials:
	\[
	A_n^\ast=\{\lambda x^\alpha:\lambda\in\Bbbk^\ast,
	\ \alpha\in\mathbb Z^n\}.
	\]
	Consequently, every \(\Bbbk\)-algebra automorphism \(\rho\) of \(A_n\)
	is uniquely determined by
	\[
	\rho(x_i)=\lambda_i x^{m_i},\qquad
	\lambda_i\in\Bbbk^\ast,\qquad 1\leq i\leq n,
	\]
	where
	\[
	m_i=(a_{1i},\ldots,a_{ni})^T,
	\qquad M_\rho=(m_1\ \cdots\ m_n)\in\GL_n(\mathbb Z).
	\]
	Indeed, \(\rho\) induces an automorphism of
	\(A_n^\ast/\Bbbk^\ast\simeq\mathbb Z^n\), represented by \(M_\rho\).
	Conversely, any such matrix and nonzero scalars define an automorphism
	of \(A_n\); see also \cite{Bavula16}.
	
	In particular, the \(i\)-th column of \(M_\rho\) is the exponent
	vector of the monomial in \(\rho(x_i)\). For every \(\alpha\in\mathbb Z^n\),
	\[
	\rho(x^\alpha)=\lambda^\alpha x^{M_\rho\alpha},
	\qquad \lambda^\alpha=\prod_{i=1}^n\lambda_i^{\alpha_i}.
	\]
	
	\begin{ex}
		Every automorphism of \(A_2\) has the form
		\[
		\rho(x_1)=\lambda_1x_1^ax_2^c,\qquad
		\rho(x_2)=\lambda_2x_1^bx_2^d,
		\]
		where \(\lambda_1,\lambda_2\in\Bbbk^\ast\) and
		\[
		M_\rho=\begin{pmatrix}a&b\\c&d\end{pmatrix},
		\qquad ad-bc=\pm1.
		\]
	\end{ex}
	
	\subsection{Locally finite derivations}
	
	For \(1\leq i\leq n\), set \(E_i=x_i\partial/\partial x_i\).
	Since
	\[
	E_i(x^\alpha)=\alpha_i x^\alpha,
	\]
	each \(E_i\) acts diagonally on the Laurent monomial basis.
	The following characterization is due to Zhao
	\cite[Proposition~2.1]{Zhao}.
	
	\begin{thm}\label{th:LFD-Laurent}
		A derivation \(D\in\operatorname{Der}_{\Bbbk}(A_n)\) is locally finite
		if and only if
		\[
		D=D_\mu=\sum_{i=1}^n\mu_i E_i
		\]
		for some \(\mu=(\mu_1,\ldots,\mu_n)^T\in\Bbbk^n\).
	\end{thm}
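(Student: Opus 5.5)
The easy direction comes first. Each \(E_i\) acts diagonally on the monomial basis, so \(D_\mu(x^\alpha)=(\mu^T\alpha)\,x^\alpha\). Hence \(D_\mu\) is semisimple with every monomial an eigenvector. For \(a=\sum_{\alpha\in S}c_\alpha x^\alpha\) with \(S\) finite, the span of \(\{D_\mu^m(a)\}\) lies in \(\Span_{\Bbbk}\{x^\alpha:\alpha\in S\}\), so \(D_\mu\) is locally finite.

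For the converse, I will use that \(A_n\) is a localization of \(\Bbbk[x_1,\ldots,x_n]\). Hence \(\operatorname{Der}_{\Bbbk}(A_n)\) is a free \(A_n\)-module on \(E_1,\ldots,E_n\), and every derivation can be written \(D=\sum_i f_iE_i\) with \(f_i=D(x_i)/x_i\in A_n\). The goal is to show every \(f_i\) is a constant. Suppose \(D\) is locally finite. Then \(V_i=\Span\{D^m(x_i)\}\) and \(W_i=\Span\{D^m(x_i^{-1})\}\) are finite-dimensional. The idea is to exploit that \(x_i\) is a unit whose inverse also generates a finite-dimensional \(D\)-stable space.

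\textbf{Step 1: reduce to algebraically closed \(\Bbbk\).} Local finiteness is preserved by extending scalars to \(\bar\Bbbk\). The conclusion \(f_i\in\Bbbk\) descends, because \(f_i\in A_n\cap\bar\Bbbk=\Bbbk\).

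\textbf{Step 2: find unit eigenvectors.} Over \(\bar\Bbbk\), write the Jordan decomposition on the finite-dimensional stable subspaces. A locally finite derivation has a semisimple part \(D_s\) and a nilpotent part \(D_n\), both again derivations, commuting, and locally finite. This is the standard fact that the semisimple part of a locally finite derivation is a derivation, because generalized eigenspaces multiply: \(A_\lambda A_\nu\subseteq A_{\lambda+\nu}\). So \(A_n=\bigoplus_\lambda A_\lambda\) is a grading by a subgroup of \((\bar\Bbbk,+)\). In a graded domain whose grading group is torsion-free, units are homogeneous. Hence each \(x_i\) is homogeneous, \(x_i\in A_{\nu_i}\), so \(D_s=\sum\nu_iE_i=D_{\nu}\).

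\textbf{Step 3: kill the nilpotent part.} Now \(D_n\) is a locally nilpotent derivation of \(A_n\). A locally nilpotent derivation vanishes on units: if \(u\) is a unit, then \(\deg_{D_n}(u)+\deg_{D_n}(u^{-1})=\deg_{D_n}(1)=0\), using that the \(D_n\)-degree is additive in a domain. Hence \(D_n(x_i)=0\) for all \(i\), so \(D_n=0\). Therefore \(D=D_\nu\) with \(\nu\in\bar\Bbbk^n\). By Step 1, \(\nu_i=D(x_i)/x_i\in\Bbbk\).

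\textbf{Main obstacle.} The step needing the most care is Step 2. One must verify that the generalized eigenspace decomposition is multiplicative, which follows from the Leibniz formula \((D-\lambda-\nu)^N(ab)=\sum\binom{N}{k}(D-\lambda)^k a\,(D-\nu)^{N-k}b\). One must also verify that units of a domain graded by a torsion-free abelian group are homogeneous. For the latter, I would order the group totally and compare the highest and lowest components of \(u\) and \(u^{-1}\) in \(uu^{-1}=1\).
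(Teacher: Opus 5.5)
Your proof is correct. The paper itself gives no proof of this theorem: it quotes Zhao~\cite[Proposition~2.1]{Zhao}, and the surrounding text checks only the easy direction (monomials are eigenvectors of $D_\mu$).

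Your converse is therefore a genuinely different, self-contained structural argument:
\begin{enumerate}
\item After extending scalars to $\bar\Bbbk$, the generalized eigenspaces of the locally finite derivation grade $\bar\Bbbk[x_1^{\pm1},\ldots,x_n^{\pm1}]$ by $(\bar\Bbbk,+)$. Your twisted Leibniz identity is exactly what this requires.
\item This splits $D$ as $D_s+D_n$, where $D_s$ is a derivation acting by the degree and $D_n$ is locally nilpotent.
\item Units of a domain graded by a torsion-free group are homogeneous, which gives $D_s=D_\nu$.
\item The $D_n$-degree is additive; characteristic zero enters here through $\binom{p+q}{p}\neq0$. This forces $D_n$ to kill the units $x_i$, hence $D_n=0$.
\end{enumerate}
The descent $\nu_i=D(x_i)x_i^{-1}\in A_n\cap\bar\Bbbk=\Bbbk$ is also fine.

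Your route buys transparency and generality. It uses only that $A_n$ remains a domain after base change and is generated by units. So the same argument shows that a locally finite derivation of any commutative $\Bbbk$-domain that stays a domain over $\bar\Bbbk$ multiplies each unit by a scalar of $\Bbbk$. That determines the derivation whenever the units generate the algebra. The paper's citation buys brevity, which is reasonable there, since only the statement is used later.
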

	
	For a derivation of this form,
	\[
	D_\mu(x^\alpha)=(\mu^T\alpha)x^\alpha.
	\]
	In particular, every Laurent polynomial belongs to the finite-dimensional
	\(D_\mu\)-invariant space spanned by the monomials in its support.
	We seek an explicit description of
	\[
	\Aut_{D_\mu}(A_n)=
	\{\rho\in\Aut_{\Bbbk}(A_n):\rho D_\mu=D_\mu\rho\}.
	\]
	When \(\mu=0\), every automorphism commutes with \(D_\mu\).
	Thus the nonzero derivations are the only cases requiring further analysis.
	
	\section{The Two-Dimensional Case}
	
	In this section, let
	\[
	D_\mu=\mu_1x_1\frac{\partial}{\partial x_1}
	+\mu_2x_2\frac{\partial}{\partial x_2},
	\qquad \mu=(\mu_1,\mu_2)^T\ne0.
	\]
	We first determine the conditions imposed on the exponents of an
	automorphism commuting with \(D_\mu\).
	
	\begin{prop}\label{prop:isotropy-condition-A2}
		An automorphism
		\[
		\rho(x_1)=\lambda_1x_1^ax_2^c,\qquad
		\rho(x_2)=\lambda_2x_1^bx_2^d
		\]
		commutes with \(D_\mu\) if and only if
		\[
		(a-1)\mu_1+c\mu_2=0,\qquad
		b\mu_1+(d-1)\mu_2=0.
		\]
		Equivalently, \(\mu^TM_\rho=\mu^T\).
	\end{prop}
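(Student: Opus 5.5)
Since both \(\rho D_\mu\) and \(D_\mu\rho\) are \(\Bbbk\)-linear, and \(\rho D_\mu\rho^{-1}\) is again a derivation of \(A_2\), the plan is to compare the two operators on the generators. A derivation of \(A_2\) is determined by its values on \(x_1,x_2\), since it is then determined on \(x_i^{-1}\) via \(D(x_i^{-1})=-x_i^{-2}D(x_i)\). Hence \(\rho D_\mu=D_\mu\rho\) holds if and only if \(\rho D_\mu\rho^{-1}\) and \(D_\mu\) agree on \(x_1,x_2\). Equivalently, it holds if and only if \(\rho(D_\mu(x_i))=D_\mu(\rho(x_i))\) for \(i=1,2\): applying \(\rho^{-1}\) on the right changes nothing about the generator test, because \(\rho D_\mu\rho^{-1}(\rho(x_i))=\rho D_\mu(x_i)\) and \(\rho(x_1),\rho(x_2)\) generate \(A_2\) as an algebra together with their inverses.

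Next we compute both sides on the generators. On one side, \(\rho(D_\mu(x_1))=\rho(\mu_1x_1)=\mu_1\lambda_1x_1^ax_2^c\). On the other, using \(D_\mu(x^\alpha)=(\mu^T\alpha)x^\alpha\), we get \(D_\mu(\rho(x_1))=\lambda_1(a\mu_1+c\mu_2)x_1^ax_2^c\). Since \(\lambda_1\neq0\) and the monomial is nonzero, equality holds iff \(a\mu_1+c\mu_2=\mu_1\), that is, \((a-1)\mu_1+c\mu_2=0\). The same computation for \(x_2\) gives \(b\mu_1+d\mu_2=\mu_2\), that is, \(b\mu_1+(d-1)\mu_2=0\).

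Finally, for the equivalent form: the two equations say \(\mu^Tm_1=\mu_1\) and \(\mu^Tm_2=\mu_2\), where \(m_1=(a,c)^T\) and \(m_2=(b,d)^T\) are the columns of \(M_\rho\). These are exactly the two entries of \(\mu^TM_\rho=\mu^T\).

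The only point needing care is the reduction to generators, and that is routine. An even more direct route avoids it: on a monomial, \(\rho D_\mu(x^\alpha)=(\mu^T\alpha)\lambda^\alpha x^{M_\rho\alpha}\), while \(D_\mu\rho(x^\alpha)=(\mu^TM_\rho\alpha)\lambda^\alpha x^{M_\rho\alpha}\). So commutation holds iff \(\mu^TM_\rho\alpha=\mu^T\alpha\) for all \(\alpha\in\mathbb Z^2\), which is iff \(\mu^TM_\rho=\mu^T\). I would present this version, since it handles all of \(A_2\) at once by linearity.
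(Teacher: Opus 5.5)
Your proposal is correct. Its first half follows the paper's argument in slightly different packaging. The paper computes \(\rho D_\mu\) and \(D_\mu\rho\) on \(x_1,x_2\) exactly as you do. It then extends the equality to \(x_i^{-1}\) via \(D(u^{-1})=-u^{-2}D(u)\), and to all of \(A_2\) ``by the Leibniz rule and linearity.'' That extension tacitly uses that both composites satisfy the twisted rule \(\delta(fg)=\delta(f)\rho(g)+\rho(f)\delta(g)\). Your device of passing to the genuine derivation \(\rho D_\mu\rho^{-1}\), and testing it on the generating set \(\rho(x_1)^{\pm1},\rho(x_2)^{\pm1}\), is a clean way to justify that step.

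The version you say you would present is a genuinely different route. You evaluate both composites on the monomial basis, using \(\rho(x^\alpha)=\lambda^\alpha x^{M_\rho\alpha}\) and \(D_\mu(x^\alpha)=(\mu^T\alpha)x^\alpha\); the paper records both formulas in its first section. This buys three things:
\begin{enumerate}
\item it avoids any extension-from-generators argument;
\item it yields the matrix condition \(\mu^TM_\rho=\mu^T\) directly, the two scalar equations being its instances \(\alpha=e_1,e_2\);
\item it carries over verbatim to the \(n\)-variable criterion.
\end{enumerate}
The trade-off is that you rely on the monomial formula for \(\rho\), which itself comes from multiplicativity of \(\rho\). The paper's route needs only the values of \(\rho\) on the generators, at the price of the extension step.
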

	
	\begin{proof}
		For the two generators, we have
		\[
		\begin{aligned}
			\rho D_\mu(x_1)&=\mu_1\lambda_1x_1^ax_2^c,&
			D_\mu\rho(x_1)&=(a\mu_1+c\mu_2)\lambda_1x_1^ax_2^c,\\
			\rho D_\mu(x_2)&=\mu_2\lambda_2x_1^bx_2^d,&
			D_\mu\rho(x_2)&=(b\mu_1+d\mu_2)\lambda_2x_1^bx_2^d.
		\end{aligned}
		\]
		Equality on \(x_1,x_2\) gives exactly the stated conditions.
		It also implies equality on their inverses, by
		\(D(u^{-1})=-u^{-2}D(u)\), and then on all of \(A_2\) by the
		Leibniz rule and linearity. Finally,
		\[
		\mu^TM_\rho=(a\mu_1+c\mu_2,\ b\mu_1+d\mu_2),
		\]
		which gives the matrix formulation.
	\end{proof}
	
	\subsection{The \(\mathbb Q\)-linearly independent case}
	
	\begin{prop}\label{prop:independent-A2}
		If \(\mu_1,\mu_2\) are linearly independent over \(\mathbb Q\), then
		\(\Aut_{D_\mu}(A_2)\) consists precisely of the diagonal automorphisms
		\[
		\rho(x_1)=\lambda_1x_1,\qquad
		\rho(x_2)=\lambda_2x_2,
		\qquad \lambda_1,\lambda_2\in\Bbbk^\ast.
		\]
	\end{prop}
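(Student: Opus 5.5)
The plan is to read off the exponents from Proposition~\ref{prop:isotropy-condition-A2}. Let \(\rho\) be an automorphism of \(A_2\), written as in the example of Section~1 with \(M_\rho=\begin{pmatrix}a&b\\c&d\end{pmatrix}\in\GL_2(\mathbb Z)\). By that proposition, \(\rho\) commutes with \(D_\mu\) if and only if
\[
(a-1)\mu_1+c\mu_2=0,\qquad b\mu_1+(d-1)\mu_2=0 .
\]
The key observation is that the coefficients \(a-1,c,b,d-1\) are integers, hence rational. Each equation is therefore a \(\mathbb Q\)-linear relation between \(\mu_1\) and \(\mu_2\).

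Since \(\mu_1,\mu_2\) are linearly independent over \(\mathbb Q\), every such relation is trivial. So \(a-1=c=0\) and \(b=d-1=0\), which gives \(M_\rho=I_2\). Thus \(\rho(x_1)=\lambda_1x_1\) and \(\rho(x_2)=\lambda_2x_2\) for some \(\lambda_1,\lambda_2\in\Bbbk^\ast\).

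For the converse, take any diagonal automorphism with \(\lambda_1,\lambda_2\in\Bbbk^\ast\). Its exponent matrix is \(I_2\), so it lies in \(\GL_2(\mathbb Z)\) and defines an automorphism of \(A_2\). It satisfies \(\mu^TI_2=\mu^T\), so by Proposition~\ref{prop:isotropy-condition-A2} it commutes with \(D_\mu\).

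There is no real obstacle here. The only point needing care is that the entries of \(M_\rho\) are integers, which is what allows \(\mathbb Q\)-linear independence to apply even though \(\mu_1,\mu_2\) lie in an arbitrary field \(\Bbbk\) of characteristic zero. Here \(\mathbb Q\subseteq\Bbbk\) because the characteristic is zero.
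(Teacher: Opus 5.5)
Your proposal is correct and follows the paper's proof exactly: the two integer-coefficient relations from Proposition~\ref{prop:isotropy-condition-A2} are forced to be trivial by $\mathbb Q$-linear independence, giving $M_\rho=I_2$. The converse is checked directly, as you do.
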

	
	\begin{proof}
		The two relations in Proposition~\ref{prop:isotropy-condition-A2}
		force \(a=d=1\) and \(b=c=0\). Conversely, every diagonal
		automorphism commutes with \(D_\mu\).
	\end{proof}
	
	\subsection{The \(\mathbb Q\)-linearly dependent case}
	
	Suppose that \(\mu_1,\mu_2\) are linearly dependent over \(\mathbb Q\).
	As \(\mu\ne0\), we can write
	\[
	\mu=\alpha(p,q)^T,
	\qquad \alpha\in\Bbbk^\ast,\quad p,q\in\mathbb Z,
	\quad \gcd(p,q)=1.
	\]
	Then \(\mu^TM=\mu^T\) is equivalent to \((p,q)M=(p,q)\).
	
	\begin{prop}\label{prop:dependent-A2}
		With the notation above, the automorphisms commuting with \(D_\mu\)
		are precisely
		\[
		\begin{aligned}
			\rho(x_1)&=\lambda_1x_1^{1+qu}x_2^{-pu},\\
			\rho(x_2)&=\lambda_2x_1^{qv}x_2^{1-pv},
		\end{aligned}
		\qquad
		\begin{gathered}
			\lambda_1,\lambda_2\in\Bbbk^\ast,\\
			u,v\in\mathbb Z,\qquad qu-pv\in\{0,-2\}.
		\end{gathered}
		\]
	\end{prop}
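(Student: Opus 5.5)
By Proposition~\ref{prop:isotropy-condition-A2} and the reduction stated just before the proposition, an automorphism commutes with \(D_\mu\) exactly when \(M=M_\rho\) lies in \(\GL_2(\mathbb Z)\) and satisfies \((p,q)M=(p,q)\). So the plan is to solve this linear condition over \(\mathbb Z\), column by column, and then impose unimodularity. The scalars \(\lambda_1,\lambda_2\) play no role and stay free.

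\textbf{Solving the linear condition.} With \(M=\begin{pmatrix}a&b\\c&d\end{pmatrix}\), the condition splits into two equations:
\[
p(a-1)+qc=0,\qquad pb+q(d-1)=0.
\]
Each says that an integer vector is orthogonal to \((p,q)\): namely \((a-1,c)\) for the first and \((b,d-1)\) for the second. Since \(\gcd(p,q)=1\), the integer solutions of \(px+qy=0\) are exactly \((x,y)=t(q,-p)\) with \(t\in\mathbb Z\). To see this, note that \(p\mid qy\) together with \(\gcd(p,q)=1\) gives \(p\mid y\). The edge cases \(p=0\) (forcing \(q=\pm1\)) and \(q=0\) I would check directly. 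This yields
\[
(a-1,c)=u(q,-p),\qquad (b,d-1)=v(q,-p),\qquad u,v\in\mathbb Z,
\]
so \(a=1+qu\), \(c=-pu\), \(b=qv\), \(d=1-pv\). These are precisely the exponents in the statement.

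\textbf{Imposing unimodularity.} Compute the determinant:
\[
ad-bc=(1+qu)(1-pv)+pqv\,u=1+qu-pv-pquv+pquv=1+(qu-pv).
\]
The cross terms cancel, which is the only point needing care. Requiring \(ad-bc=\pm1\) is therefore the same as requiring \(qu-pv\in\{0,-2\}\).

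\textbf{Converse.} Any such \(u,v\) and any \(\lambda_i\in\Bbbk^\ast\) give a matrix in \(\GL_2(\mathbb Z)\), hence an automorphism. That automorphism satisfies \((p,q)M=(p,q)\) by construction, so it commutes with \(D_\mu\) by Proposition~\ref{prop:isotropy-condition-A2}.

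The argument is essentially routine. The only real content is the description of the integer kernel of \((p,q)\), which relies on \(\gcd(p,q)=1\).
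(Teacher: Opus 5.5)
Your proof is correct and follows essentially the same route as the paper. Both arguments put the columns of $M_\rho-I_2$ in the kernel of $(h,k)^T\mapsto ph+qk$, which is generated by $(q,-p)^T$ since $\gcd(p,q)=1$, and then use the determinant $1+qu-pv$ to obtain $qu-pv\in\{0,-2\}$; the paper just packages this as $M_\rho-I_2=(q,-p)^T(u\ v)$, and its converse is the same as yours.
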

	
	\begin{proof}
		Write \(M_\rho=\begin{pmatrix}a&b\\c&d\end{pmatrix}\).
		By Proposition~\ref{prop:isotropy-condition-A2}, commutation is equivalent to
		\[
		p(a-1)+qc=0,\qquad pb+q(d-1)=0.
		\]
		Thus both columns of \(M_\rho-I_2\) belong to the kernel of
		\[
		\mathbb Z^2\longrightarrow\mathbb Z,
		\qquad (h,k)^T\longmapsto ph+qk.
		\]
		As \(\gcd(p,q)=1\), this kernel is generated by \((q,-p)^T\).
		Consequently, for some \(u,v\in\mathbb Z\),
		\[
		M_\rho-I_2=\begin{pmatrix}q\\-p\end{pmatrix}(u\ v),
		\qquad
		M_\rho=\begin{pmatrix}1+qu&qv\\-pu&1-pv\end{pmatrix}.
		\]
		Its determinant is \(1+qu-pv\). Therefore, this matrix is unimodular
		if and only if \(qu-pv\in\{0,-2\}\).
		Conversely, every matrix of this form with determinant \(\pm1\)
		fixes \((p,q)\) on the right and defines, for arbitrary nonzero
		\(\lambda_1,\lambda_2\), a commuting automorphism.
	\end{proof}
	
	\begin{rem}\label{rem:det-one-A2}
		The automorphisms in Proposition~\ref{prop:dependent-A2} whose
		exponent matrices have determinant one are obtained by taking
		\(u=pt\) and \(v=qt\), with \(t\in\mathbb Z\). Their exponent matrices are
		\[
		M_t=\begin{pmatrix}
			1+pqt&q^2t\\-p^2t&1-pqt
		\end{pmatrix}.
		\]
		This follows from \(qu-pv=0\) and \(\gcd(p,q)=1\).
	\end{rem}
	
	\begin{thm}\label{th:isotropy-LFD-A2}
		Let \(D_\mu\ne0\) be a locally finite derivation of \(A_2\).
		If \(\mu_1,\mu_2\) are linearly independent over \(\mathbb Q\),
		its commuting automorphisms are exactly the diagonal scalings.
		Otherwise, they are precisely the automorphisms parametrized in
		Proposition~\ref{prop:dependent-A2}.
	\end{thm}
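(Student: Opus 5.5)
The theorem is essentially a summary of the preceding propositions, so the proof will be a case split on the \(\mathbb Q\)-linear dependence of \(\mu_1,\mu_2\), assembling earlier results. First, by Theorem~\ref{th:LFD-Laurent}, every locally finite derivation of \(A_2\) has the form \(D_\mu\). By the description of \(\Aut_{\Bbbk}(A_2)\) in the example of Section~1, every automorphism can be written as \(\rho(x_1)=\lambda_1x_1^ax_2^c\), \(\rho(x_2)=\lambda_2x_1^bx_2^d\) with \(ad-bc=\pm1\). Proposition~\ref{prop:isotropy-condition-A2} then reduces commutation to the linear condition \(\mu^TM_\rho=\mu^T\) on the exponent matrix, with no constraint on \(\lambda_1,\lambda_2\).

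In the independent case, I will invoke Proposition~\ref{prop:independent-A2}. The relations \((a-1)\mu_1+c\mu_2=0\) and \(b\mu_1+(d-1)\mu_2=0\) have integer coefficients, so \(\mathbb Q\)-independence forces \(M_\rho=I_2\). Hence the commuting automorphisms are exactly the diagonal scalings, and each of these clearly commutes with \(D_\mu\).

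In the dependent case, I will use \(\mu\neq0\) to write \(\mu=\alpha(p,q)^T\) with \(\alpha\in\Bbbk^\ast\) and \(\gcd(p,q)=1\). To justify this normalization, pick a nonzero \(\mu_i\). Dependence over \(\mathbb Q\) means the ratio of the other weight to \(\mu_i\) is rational, and clearing denominators gives coprime integers \(p,q\). Since \(\alpha\neq0\) and \(\operatorname{char}\Bbbk=0\), the condition \(\mu^TM=\mu^T\) is equivalent to \((p,q)M=(p,q)\). Proposition~\ref{prop:dependent-A2} then gives exactly the stated parametrization. Its proof identifies the kernel of \((h,k)\mapsto ph+qk\) as \(\mathbb Z(q,-p)^T\) and reads off unimodularity from the determinant \(1+qu-pv\).

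The only delicate step, and it is mild, is checking that the parametrization in the dependent case is independent of the choice of \((p,q)\). The pair is determined only up to sign, with \(\alpha\) absorbing the sign. Replacing \((p,q,u,v)\) by \((-p,-q,-u,-v)\) gives the same matrices, so the set described is well defined. With that observation, the theorem follows by combining the two cases.
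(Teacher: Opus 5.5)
Your proposal is correct and matches the paper, which gives no separate proof and treats this theorem as the combination of Propositions~\ref{prop:independent-A2} and~\ref{prop:dependent-A2} via the criterion of Proposition~\ref{prop:isotropy-condition-A2}. Your closing well-definedness check is sound but not needed: Proposition~\ref{prop:dependent-A2} holds for every admissible choice of \((p,q)\), and the set it describes is \(\Aut_{D_\mu}(A_2)\), which does not depend on that choice.
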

	
	\section{The General Case}
	
	Let
	\[
	D_\mu=\sum_{i=1}^n\mu_i x_i\frac{\partial}{\partial x_i},
	\qquad \mu=(\mu_1,\ldots,\mu_n)^T\in\Bbbk^n\setminus\{0\}.
	\]
	We first express commutation in terms of the columns of the exponent matrix.
	
	\begin{prop}\label{isotropy-condition-An}
		Let \(\rho\in\Aut_{\Bbbk}(A_n)\), with
		\(\rho(x_i)=\lambda_i x^{m_i}\) and
		\(M_\rho=(m_1\ \cdots\ m_n)\).
		Then
		\[
		\rho D_\mu=D_\mu\rho
		\quad\Longleftrightarrow\quad
		\mu^Tm_i=\mu_i\ \text{for every }i
		\quad\Longleftrightarrow\quad
		\mu^TM_\rho=\mu^T.
		\]
	\end{prop}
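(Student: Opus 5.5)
The plan mirrors the proof of Proposition~\ref{prop:isotropy-condition-A2}: compare the two sides of \(\rho D_\mu=D_\mu\rho\) on the generators, then extend to all of \(A_n\).

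First I evaluate both composites on a generator \(x_i\). Since \(D_\mu(x_i)=\mu_i x_i\), we get
\[
\rho D_\mu(x_i)=\mu_i\lambda_i x^{m_i}.
\]
On the other side, the formula \(D_\mu(x^\alpha)=(\mu^T\alpha)x^\alpha\) gives
\[
D_\mu\rho(x_i)=\lambda_i(\mu^Tm_i)x^{m_i}.
\]
Because \(\lambda_i\ne0\) and \(x^{m_i}\ne0\), these agree if and only if \(\mu^Tm_i=\mu_i\). So commutation implies the column conditions immediately.

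For the converse, I would note that \(\rho D_\mu\rho^{-1}\) and \(D_\mu\) are both \(\Bbbk\)-derivations of \(A_n\). A derivation of \(A_n\) is determined by its values on \(x_1,\ldots,x_n\): it is determined on \(x_i^{-1}\) via \(D(x_i^{-1})=-x_i^{-2}D(x_i)\), hence on all Laurent monomials by the Leibniz rule, hence everywhere by linearity. The condition \(\rho D_\mu(x_i)=D_\mu\rho(x_i)\) for all \(i\) gives
\[
\rho D_\mu\rho^{-1}(\rho(x_i))=D_\mu(\rho(x_i)).
\]
Thus the two derivations agree on the generators \(\rho(x_i)\) of \(A_n\) (and on their inverses), so they are equal. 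Alternatively, and more directly, I can check both sides on an arbitrary monomial using \(\rho(x^\alpha)=\lambda^\alpha x^{M_\rho\alpha}\):
\[
\rho D_\mu(x^\alpha)=(\mu^T\alpha)\lambda^\alpha x^{M_\rho\alpha},
\qquad
D_\mu\rho(x^\alpha)=(\mu^TM_\rho\alpha)\lambda^\alpha x^{M_\rho\alpha}.
\]
These coincide for all \(\alpha\) precisely when \(\mu^TM_\rho=\mu^T\). I would present this monomial computation as the main argument, since it gives both directions at once: testing \(\alpha=e_i\) yields necessity, and the identity for every \(\alpha\) combined with linearity yields sufficiency.

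Finally, the second equivalence is bookkeeping: the \(i\)-th entry of the row vector \(\mu^TM_\rho\) is \(\mu^Tm_i\), because \(m_i\) is the \(i\)-th column of \(M_\rho\). There is no real obstacle here. The only point needing care is justifying that checking on monomials, or on generators, suffices. That follows from the \(x^\alpha\) forming a \(\Bbbk\)-basis of \(A_n\) and both maps being \(\Bbbk\)-linear.
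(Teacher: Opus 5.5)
Your proposal is correct and follows the paper's proof: compare $\rho D_\mu$ and $D_\mu\rho$ on the generators $x_i$, then extend to all of $A_n$. Your extension step is slightly more careful than the paper's. The composites $\rho D_\mu$ and $D_\mu\rho$ are not themselves derivations (they obey a $\rho$-twisted Leibniz rule), and both your conjugation to the derivation $\rho D_\mu\rho^{-1}$ and your direct check on monomials via $\rho(x^\alpha)=\lambda^\alpha x^{M_\rho\alpha}$ handle this cleanly.
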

	
	\begin{proof}
		For each \(i\),
		\[
		\rho D_\mu(x_i)=\mu_i\lambda_i x^{m_i},
		\qquad D_\mu\rho(x_i)=(\mu^Tm_i)\lambda_i x^{m_i}.
		\]
		These expressions agree exactly when \(\mu^Tm_i=\mu_i\).
		As in the two-dimensional case, equality on the \(x_i\) implies
		equality on their inverses and hence on all of \(A_n\).
		The conditions on the columns are equivalent to the stated matrix equality.
	\end{proof}
	
	Set
	\[
	L_\mu=\{\ell\in\mathbb Z^n:\mu^T\ell=0\},
	\qquad r=\dim_{\mathbb Q}\Span_{\mathbb Q}\{\mu_1,\ldots,\mu_n\}.
	\]
	Since \(\mu_i=\mu^Te_i\), the proposition says that each exponent
	vector has the form \(m_i=e_i+\ell_i\), with \(\ell_i\in L_\mu\).
	To describe all unimodular matrices with these columns, we choose
	an integral basis adapted to \(L_\mu\).
	
	\begin{lem}\label{lem:adapted-basis}
		There exists a basis
		\(u_1,\ldots,u_r,v_1,\ldots,v_s\) of \(\mathbb Z^n\), where
		\(s=n-r\), such that \(v_1,\ldots,v_s\) is a basis of \(L_\mu\).
		In particular, \(\rank_{\mathbb Z}L_\mu=n-r\).
	\end{lem}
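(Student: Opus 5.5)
The plan is to show that \(L_\mu\) is a saturated (primitive) sublattice of \(\mathbb Z^n\), and to extend a basis of it to a basis of \(\mathbb Z^n\).

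First, \(L_\mu\) is saturated: the quotient \(\mathbb Z^n/L_\mu\) is torsion-free. Indeed, suppose \(k\ell\in L_\mu\) with \(k\in\mathbb Z\setminus\{0\}\) and \(\ell\in\mathbb Z^n\). Then \(k\,\mu^T\ell=0\) in \(\Bbbk\). Since \(\operatorname{char}\Bbbk=0\), \(k\) is invertible, so \(\mu^T\ell=0\) and \(\ell\in L_\mu\). A finitely generated torsion-free abelian group is free, so \(\mathbb Z^n/L_\mu\cong\mathbb Z^t\) for some \(t\). The projection \(\mathbb Z^n\to\mathbb Z^t\) therefore splits. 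Choose lifts \(u_1,\ldots,u_t\) of a basis of \(\mathbb Z^t\), and let \(v_1,\ldots,v_s\) be a basis of the free group \(L_\mu\), a subgroup of \(\mathbb Z^n\). Together these form a basis of \(\mathbb Z^n\), by the standard argument for split short exact sequences.

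It remains to show that \(t=r\), equivalently \(\rank_{\mathbb Z}L_\mu=n-r\). This is the main step. Consider the \(\mathbb Q\)-linear map
\[
\phi:\mathbb Q^n\to\Bbbk,\qquad q\mapsto\mu^Tq=\sum q_i\mu_i .
\]
Its image is \(\Span_{\mathbb Q}\{\mu_1,\ldots,\mu_n\}\), which has \(\mathbb Q\)-dimension \(r\). Hence \(\dim_{\mathbb Q}\ker\phi=n-r\).

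Next, \(\ker\phi\cap\mathbb Z^n=L_\mu\). Moreover \(\ker\phi=L_\mu\otimes\mathbb Q\): any rational kernel vector becomes integral after clearing denominators and still lies in the kernel. So \(\rank_{\mathbb Z}L_\mu=\dim_{\mathbb Q}(L_\mu\otimes\mathbb Q)=n-r\). Since \(t+s=n\), we get \(t=r\) and \(s=n-r\), which completes the proof.
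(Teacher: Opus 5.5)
Your proof is correct and is essentially the paper's argument: both split \(0\to L_\mu\to\mathbb Z^n\to\mathbb Z\mu_1+\cdots+\mathbb Z\mu_n\to0\) using that this image, which is isomorphic to \(\mathbb Z^n/L_\mu\), is torsion-free (your saturation step makes the use of characteristic zero explicit), and then lift a basis. The only difference is that you get \(\rank_{\mathbb Z}L_\mu=n-r\) by rank--nullity for the \(\mathbb Q\)-linear extension of \(a\mapsto\mu^Ta\), whereas the paper asserts without justification that the image has rank \(r\) and reads off \(r+s=n\) from the basis.
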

	
	\begin{proof}
		Consider
		\[
		\varphi_\mu:\mathbb Z^n\longrightarrow(\Bbbk,+),
		\qquad a\longmapsto\mu^Ta.
		\]
		Its kernel is \(L_\mu\), and its image is the finitely generated
		torsion-free abelian group \(\mathbb Z\mu_1+\cdots+\mathbb Z\mu_n\),
		which has rank \(r\). Choose a basis \(\gamma_1,\ldots,\gamma_r\)
		of this image and lifts \(u_i\in\mathbb Z^n\) with
		\(\varphi_\mu(u_i)=\gamma_i\). As a subgroup of \(\mathbb Z^n\),
		\(L_\mu\) is free abelian; choose a basis \(v_1,\ldots,v_s\).
		For every \(z\in\mathbb Z^n\), write
		\(\varphi_\mu(z)=\sum_i a_i\gamma_i\). Then
		\(z-\sum_i a_i u_i\in L_\mu\), so the \(u_i\) and \(v_j\)
		generate \(\mathbb Z^n\). If
		\(\sum_i a_i u_i+\sum_j b_jv_j=0\), applying \(\varphi_\mu\)
		first gives all \(a_i=0\), and then all \(b_j=0\).
		Thus these vectors form a basis, and \(r+s=n\).
	\end{proof}
	
	\begin{prop}\label{prop:structure-Gmu}
		Choose a basis as in Lemma~\ref{lem:adapted-basis}, and set
		\[
		U=(u_1\ \cdots\ u_r\ v_1\ \cdots\ v_s)\in\GL_n(\mathbb Z).
		\]
		Then an integer matrix \(M\) is unimodular and satisfies
		\(\mu^TM=\mu^T\) if and only if
		\[
		M=U\begin{pmatrix}I_r&0\\B&C\end{pmatrix}U^{-1},
		\qquad
		B\in M_{s\times r}(\mathbb Z),\quad C\in\GL_s(\mathbb Z).
		\]
	\end{prop}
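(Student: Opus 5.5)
The plan is to conjugate by \(U\) and translate the condition \(\mu^TM=\mu^T\) into a condition on \(N=U^{-1}MU\). Since \(U\in\GL_n(\mathbb Z)\), the map \(M\mapsto N=U^{-1}MU\) is a bijection of \(M_n(\mathbb Z)\) that preserves unimodularity. Moreover, \(\mu^TM=\mu^T\) holds if and only if \(\mu^TUN=\mu^TU\). So it suffices to show that an integer matrix \(N\) is unimodular and satisfies \(w^TN=w^T\), where \(w^T=\mu^TU\), exactly when \(N\) has the stated block form. By construction,
\[
w^T=(\gamma_1,\ldots,\gamma_r,0,\ldots,0),
\]
since \(\varphi_\mu(u_i)=\gamma_i\) and \(\varphi_\mu(v_j)=0\).

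Next, read off the condition column by column. The condition \(w^TN=w^T\) says that for each column \(N e_k\) we have \(\varphi_\mu(UNe_k)=\varphi_\mu(Ue_k)\). For \(k\le r\), this means \(\sum_{i\le r}N_{ik}\gamma_i=\gamma_k\). For \(k>r\), it means \(\sum_{i\le r}N_{ik}\gamma_i=0\). The \(\gamma_i\) form a \(\mathbb Z\)-basis of the image of \(\varphi_\mu\), so they are \(\mathbb Z\)-linearly independent. Hence the top \(r\) entries of column \(k\) equal \(e_k\) for \(k\le r\) and vanish for \(k>r\). The bottom \(s\) rows are unconstrained. Thus \(w^TN=w^T\) is equivalent to
\[
N=\begin{pmatrix}I_r&0\\B&C\end{pmatrix}
\]
with \(B\) and \(C\) arbitrary integer matrices.

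Finally, handle unimodularity. The matrix \(N\) is block lower triangular, so \(\det N=\det C\). Therefore \(N\in\GL_n(\mathbb Z)\) if and only if \(\det C=\pm1\), that is, \(C\in\GL_s(\mathbb Z)\). Conversely, any such \(B\) and \(C\) yield a unimodular \(M=UNU^{-1}\) fixing \(\mu^T\) on the right, by reversing the steps above.

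The only step needing care is the \(\mathbb Z\)-linear independence of the \(\gamma_i\). This follows because they were chosen as a basis of the free abelian image in Lemma~\ref{lem:adapted-basis}. Note that no \(\mathbb Q\)-independence over \(\Bbbk\) is needed beyond that. The rest is bookkeeping.
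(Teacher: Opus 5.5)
Your proof is correct and follows essentially the same approach as the paper: conjugate by \(U\), use \(\mu^TU=(\gamma_1,\ldots,\gamma_r,0,\ldots,0)\) and the independence of the \(\gamma_i\) (the paper says over \(\mathbb Q\), you say over \(\mathbb Z\), which is equivalent here) to force the top block row of \(N\) to be \((I_r\ 0)\), then read off unimodularity from \(\det N=\det C\). Your column-by-column reading is just an entrywise version of the paper's block computation.
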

	
	\begin{proof}
		Put \(\gamma_i=\varphi_\mu(u_i)\). These elements form a basis of
		\(\im\varphi_\mu\) and are therefore linearly independent over
		\(\mathbb Q\). Moreover,
		\[
		\mu^TU=(\gamma_1,\ldots,\gamma_r,0,\ldots,0).
		\]
		For an integer matrix \(M\), write
		\[
		N=U^{-1}MU=\begin{pmatrix}A&H\\B&C\end{pmatrix}.
		\]
		Multiplying \(\mu^TM=\mu^T\) on the right by \(U\) gives
		\[
		(\gamma_1,\ldots,\gamma_r,0,\ldots,0)N
		=(\gamma_1,\ldots,\gamma_r,0,\ldots,0).
		\]
		The independence of the \(\gamma_i\) implies that this equality
		holds exactly when \(A=I_r\) and \(H=0\).
		For such a matrix, \(\det M=\det N=\det C\). Hence \(M\) is
		unimodular exactly when \(C\in\GL_s(\mathbb Z)\), with no
		restriction on \(B\).
	\end{proof}
	
	When \(s=0\), the block matrix in this proposition is simply \(I_n\).
	We use the convention \(\GL_0(\mathbb Z)=\{1\}\).
	
	\begin{thm}\label{th:isotropy-LFD-An}
		Let \(D_\mu\ne0\) be a locally finite derivation of \(A_n\).
		The automorphisms commuting with \(D_\mu\) are precisely
		\[
		\rho(x_i)=\lambda_i x^{e_i+\ell_i},\qquad
		\lambda_i\in\Bbbk^\ast,\quad \ell_i\in L_\mu,\quad 1\leq i\leq n,
		\]
		for which
		\[
		(e_1+\ell_1\ \cdots\ e_n+\ell_n)\in\GL_n(\mathbb Z).
		\]
		Equivalently, fix the matrix \(U\) from
		Proposition~\ref{prop:structure-Gmu}. All such automorphisms, and only
		these, are obtained by choosing arbitrary
		\[
		\lambda_1,\ldots,\lambda_n\in\Bbbk^\ast,\qquad
		B\in M_{s\times r}(\mathbb Z),\qquad C\in\GL_s(\mathbb Z),
		\]
		setting
		\[
		M=U\begin{pmatrix}I_r&0\\B&C\end{pmatrix}U^{-1},
		\]
		and defining \(\rho(x_i)=\lambda_i x^{Me_i}\).
	\end{thm}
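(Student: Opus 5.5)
The theorem is essentially an assembly of Proposition~\ref{isotropy-condition-An}, Lemma~\ref{lem:adapted-basis} and Proposition~\ref{prop:structure-Gmu}. I would start from the description of \(\Aut_{\Bbbk}(A_n)\) in Section~1. Every automorphism \(\rho\) has the form \(\rho(x_i)=\lambda_i x^{m_i}\) with \(\lambda_i\in\Bbbk^\ast\) and \(M_\rho=(m_1\ \cdots\ m_n)\in\GL_n(\mathbb Z)\). Conversely, every such choice of scalars and unimodular matrix defines an automorphism, and distinct choices give distinct automorphisms. So describing \(\Aut_{D_\mu}(A_n)\) amounts to describing the pairs \((\lambda,M_\rho)\) for which \(\rho\) commutes with \(D_\mu\).

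\emph{First description.} By Proposition~\ref{isotropy-condition-An}, \(\rho\) commutes with \(D_\mu\) if and only if \(\mu^Tm_i=\mu_i=\mu^Te_i\) for every \(i\). This is equivalent to \(\ell_i:=m_i-e_i\in L_\mu\), and it places no condition on the scalars \(\lambda_i\). Thus the commuting automorphisms are exactly the maps \(\rho(x_i)=\lambda_i x^{e_i+\ell_i}\) with \(\ell_i\in L_\mu\). The only remaining requirement is that the exponent matrix \((e_1+\ell_1\ \cdots\ e_n+\ell_n)\) be unimodular, which is exactly what is needed for \(\rho\) to be an automorphism. This gives both inclusions of the first characterization.

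\emph{Parametrized description.} Let \(M=(e_1+\ell_1\ \cdots\ e_n+\ell_n)\). The set of such \(M\) lying in \(\GL_n(\mathbb Z)\) is precisely the set of unimodular integer matrices satisfying \(\mu^TM=\mu^T\), by the equivalence just established. Proposition~\ref{prop:structure-Gmu} identifies this set with \(\{U\begin{pmatrix}I_r&0\\B&C\end{pmatrix}U^{-1}: B\in M_{s\times r}(\mathbb Z),\ C\in\GL_s(\mathbb Z)\}\).

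For the forward direction, take any commuting \(\rho\). Its matrix \(M_\rho\) has this form for some \(B\) and \(C\), and \(\rho(x_i)=\lambda_i x^{M_\rho e_i}\). For the converse, take arbitrary \(\lambda_i\), \(B\) and \(C\). The resulting \(M\) is an integer matrix, because \(U\) and \(U^{-1}\) are integral. It is unimodular and satisfies \(\mu^TM=\mu^T\). Hence \(\rho(x_i)=\lambda_ix^{Me_i}\) is an automorphism commuting with \(D_\mu\).

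In the degenerate case \(s=0\), Lemma~\ref{lem:adapted-basis} gives \(L_\mu=0\). Using the convention \(\GL_0(\mathbb Z)=\{1\}\), the block matrix is \(I_n\), and the argument above goes through unchanged, yielding the diagonal automorphisms.

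There is no real obstacle here; the substantive work was done in Proposition~\ref{prop:structure-Gmu}. The one point that needs care is the bookkeeping: the bijection between automorphisms and pairs (scalars, unimodular matrix) must be invoked in both directions. In particular, one has to confirm that the converse construction really produces an automorphism, and not merely an endomorphism commuting with \(D_\mu\).
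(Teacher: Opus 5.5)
Your proposal is correct and follows the paper's proof: the first description comes from Proposition~\ref{isotropy-condition-An} via the equivalence \(\mu^Tm_i=\mu_i \iff m_i-e_i\in L_\mu\), and the parametrization is read off from Proposition~\ref{prop:structure-Gmu}. Your added bookkeeping about the bijection between automorphisms and pairs (scalars, unimodular matrix), and about the case \(s=0\), makes explicit what the paper leaves implicit.
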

	
	\begin{proof}
		The first description follows from
		Proposition~\ref{isotropy-condition-An}, since
		\(\mu^Tm_i=\mu_i\) is equivalent to \(m_i-e_i\in L_\mu\).
		The parametrization of all admissible matrices is given by
		Proposition~\ref{prop:structure-Gmu}.
	\end{proof}
	
	\begin{coro}\label{cor:independent-An}
		If \(\mu_1,\ldots,\mu_n\) are linearly independent over \(\mathbb Q\),
		then the automorphisms commuting with \(D_\mu\) are precisely
		\[
		\rho(x_i)=\lambda_i x_i,\qquad
		\lambda_i\in\Bbbk^\ast,\quad 1\leq i\leq n.
		\]
	\end{coro}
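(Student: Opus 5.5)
The corollary follows directly from Theorem~\ref{th:isotropy-LFD-An}. The key observation is that \(\mathbb Q\)-linear independence forces the lattice \(L_\mu\) to be trivial, which leaves no room for any exponent other than the identity.

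\emph{Step 1: \(L_\mu=0\).} If \(\mu_1,\ldots,\mu_n\) are linearly independent over \(\mathbb Q\), then \(r=n\), so \(s=0\). By Lemma~\ref{lem:adapted-basis}, \(\rank_{\mathbb Z}L_\mu=0\), hence \(L_\mu=0\). One can also see this directly: an element \(\ell\in L_\mu\) satisfies \(\sum_i \ell_i\mu_i=0\) with \(\ell_i\in\mathbb Z\subset\mathbb Q\), and independence gives \(\ell=0\).

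\emph{Step 2: the commuting automorphisms are diagonal.} By the first description in Theorem~\ref{th:isotropy-LFD-An}, every commuting automorphism has the form \(\rho(x_i)=\lambda_i x^{e_i+\ell_i}\) with \(\ell_i\in L_\mu=0\). So \(\rho(x_i)=\lambda_i x_i\). The unimodularity condition is automatic here, since the exponent matrix is \(I_n\).

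Equivalently, in the block parametrization with \(s=0\) and the convention \(\GL_0(\mathbb Z)=\{1\}\), the block matrix reduces to \(I_n\). Then \(M=UI_nU^{-1}=I_n\), and again \(\rho(x_i)=\lambda_i x_i\).

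\emph{Step 3: the converse.} Every such diagonal map is an automorphism and commutes with \(D_\mu\). Indeed, its exponent matrix \(I_n\) satisfies \(\mu^TI_n=\mu^T\), so Proposition~\ref{isotropy-condition-An} applies.

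There is no real obstacle. The only point needing care is that independence over \(\mathbb Q\) gives \(r=n\) and hence kills \(L_\mu\), and Step 1 settles this.
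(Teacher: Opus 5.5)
Your proposal is correct and matches the paper's proof: both observe that $\mathbb Q$-linear independence forces $L_\mu=\{0\}$ and then read off the diagonal scalings from Theorem~\ref{th:isotropy-LFD-An}. Independence also gives $\mu\neq0$, so the theorem's hypothesis $D_\mu\neq0$ is satisfied, though you do not state this.
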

	
	\begin{proof}
		In this case \(L_\mu=\{0\}\), so the assertion follows from
		Theorem~\ref{th:isotropy-LFD-An}.
	\end{proof}
	
	\begin{rem}[Abstract group structure]\label{rem:group-structure}
		The preceding descriptions also identify the abstract isotropy group.
		Set \(G_\mu=\{M\in\GL_n(\mathbb Z):\mu^TM=\mu^T\}\).
		In the adapted basis, multiplication is given by
		\[
		\begin{pmatrix}I_r&0\\B&C\end{pmatrix}
		\begin{pmatrix}I_r&0\\B'&C'\end{pmatrix}
		=\begin{pmatrix}I_r&0\\B+CB'&CC'\end{pmatrix}.
		\]
		Consequently,
		\[
		G_\mu\simeq\mathbb Z^{r(n-r)}\rtimes\GL_{n-r}(\mathbb Z),
		\]
		where \(C\) acts on \(B\) by left multiplication.
		The map \(\rho\mapsto M_\rho\) is a surjective homomorphism from
		\(\Aut_{D_\mu}(A_n)\) to \(G_\mu\), with kernel the diagonal scalings,
		and it splits by \(M\mapsto\sigma_M\), where
		\(\sigma_M(x^\alpha)=x^{M\alpha}\). Thus
		\[
		\Aut_{D_\mu}(A_n)\simeq G_\mu\ltimes(\Bbbk^\ast)^n,
		\]
		with the torus as the normal factor. Explicitly, if
		\(\tau_\lambda(x_i)=\lambda_i x_i\), then
		\[
		\sigma_M\tau_\lambda\sigma_M^{-1}=\tau_{M\cdot\lambda},
		\qquad (M\cdot\lambda)_i=\lambda^{M^{-1}e_i}.
		\]
		For \(n=2\) and \(r=1\), the matrix group is
		\(\mathbb Z\rtimes\{\pm1\}\), where \(-1\) acts by negation;
		this is the infinite dihedral group \(D_\infty\).
		For \(r=n\), the isotropy group is \((\Bbbk^\ast)^n\).
		If \(\mu=0\), every automorphism commutes with \(D_\mu\), and
		\(\Aut_{D_\mu}(A_n)\simeq\GL_n(\mathbb Z)\ltimes(\Bbbk^\ast)^n\).
	\end{rem}
	
	\section{Isotropy Groups and Flows on Algebraic Tori}
	
	In this section, assume that \(\Bbbk=\mathbb R\) or \(\mathbb C\),
	and let \(t\in\Bbbk\). The differential system associated with
	\(D_\mu\) is
	\[
	\frac{dx_i(t)}{dt}=\mu_i x_i(t),\qquad 1\leq i\leq n.
	\]
	Its solution defines the diagonal flow
	\[
	\Phi_t(z_1,\ldots,z_n)
	=(e^{\mu_1t}z_1,\ldots,e^{\mu_nt}z_n)
	\]
	on \((\Bbbk^\ast)^n\). This flow is analytic in \(t\); for each fixed
	\(t\), it is an algebraic automorphism of the torus and extends to a
	linear diagonal automorphism of \(\Bbbk^n\). In particular, it is
	polynomial in the initial conditions, in accordance with the notion
	of polynomial flow considered in \cite{V92}. The dependence on
	\(t\) need not be algebraic, so this is generally not an algebraic
	\(\mathbb G_a\)-action.
	
	We distinguish \(\Phi_t\) from its pullback on the coordinate ring,
	\[
	\Phi_t^\ast:A_n\longrightarrow A_n,\qquad
	\Phi_t^\ast(f)=f\circ\Phi_t.
	\]
	Thus \(\Phi_t^\ast(x_i)=e^{\mu_i t}x_i\).
	For an automorphism \(\rho(x_i)=\lambda_i x^{m_i}\), let
	\[
	F_\rho(z)=(\lambda_1z^{m_1},\ldots,\lambda_nz^{m_n})
	\]
	be the corresponding automorphism of the torus, so that
	\(F_\rho^\ast=\rho\).
	
	\begin{prop}\label{prop:flow-isotropy}
		For \(\rho\in\Aut_{\Bbbk}(A_n)\), the following are equivalent:
		\begin{enumerate}
			\item \(\rho D_\mu=D_\mu\rho\);
			\item \(\mu^TM_\rho=\mu^T\);
			\item \(\rho\Phi_t^\ast=\Phi_t^\ast\rho\) for every \(t\in\Bbbk\);
			\item \(F_\rho\circ\Phi_t=\Phi_t\circ F_\rho\) for every \(t\in\Bbbk\).
		\end{enumerate}
	\end{prop}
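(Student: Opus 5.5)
(1)$\Leftrightarrow$(2) is Proposition~\ref{isotropy-condition-An}. I will prove (2)$\Leftrightarrow$(3) by evaluating on generators, and (3)$\Leftrightarrow$(4) by contravariance of pullback.

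For (2)$\Leftrightarrow$(3), both sides of (3) are $\Bbbk$-algebra homomorphisms, so it suffices to compare them on $x_1,\ldots,x_n$. Since $\Phi_t^\ast(x^\alpha)=e^{t\mu^T\alpha}x^\alpha$, we get
\[
\rho\Phi_t^\ast(x_i)=e^{\mu_it}\lambda_ix^{m_i},\qquad
\Phi_t^\ast\rho(x_i)=\lambda_i e^{t\mu^Tm_i}x^{m_i}.
\]
So (3) holds if and only if $e^{t\mu_i}=e^{t\mu^Tm_i}$ for all $t\in\Bbbk$ and all $i$. If (2) holds, then $\mu^Tm_i=\mu_i$ and this is immediate. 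For the converse, I will set $c=\mu^Tm_i-\mu_i$. The condition then says that $e^{tc}=1$ for every $t\in\Bbbk$. Differentiating at $t=0$ gives $c=0$. Alternatively, the map $t\mapsto tc$ would have to land in the discrete set $2\pi i\mathbb Z$, which forces $c=0$. This is the only place where some care is needed: equality of exponentials for a single $t$ would not suffice, and the hypothesis ``for every $t$'' is exactly what rules out periodicity. Thus $\mu^TM_\rho=\mu^T$.

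For (3)$\Leftrightarrow$(4), I will use that $F\mapsto F^\ast$ is injective on morphisms of the torus and reverses composition: $(F\circ G)^\ast=G^\ast\circ F^\ast$. Applying this with $F_\rho^\ast=\rho$ gives
\[
(F_\rho\circ\Phi_t)^\ast=\Phi_t^\ast\rho,\qquad (\Phi_t\circ F_\rho)^\ast=\rho\,\Phi_t^\ast .
\]
So (4) implies (3). Conversely, the two maps in (4) have the same coordinate functions, because the coordinates are pullbacks of the $x_i$, so (3) implies (4). Alternatively, one can compute directly:
\[
F_\rho(\Phi_t z)_i=\lambda_ie^{t\mu^Tm_i}z^{m_i},\qquad
\Phi_t(F_\rho z)_i=e^{t\mu_i}\lambda_iz^{m_i}.
\]
This reduces (4) to the same scalar condition as in the previous step.
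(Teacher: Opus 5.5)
Your proposal is correct and follows the paper's proof essentially step by step. You get (1)$\Leftrightarrow$(2) from Proposition~\ref{isotropy-condition-An}, (2)$\Leftrightarrow$(3) by comparing both composites on the generators $x_i$ and differentiating $e^{t\mu_i}=e^{t\mu^Tm_i}$ at $t=0$, and (3)$\Leftrightarrow$(4) from $F_\rho^\ast=\rho$ together with the contravariance and injectivity of pullback; your discreteness argument and direct coordinate computation are valid alternatives but not needed.
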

	
	\begin{proof}
		The equivalence between (1) and (2) follows from
		Proposition~\ref{isotropy-condition-An}.
		We prove that (2) and (3) are equivalent. For each \(i\), write
		\(m_i=(m_{1i},\ldots,m_{ni})^T\). Thus
		\[
		\rho(x_i)=\lambda_i x^{m_i}
		=\lambda_i x_1^{m_{1i}}\cdots x_n^{m_{ni}}.
		\]
		By definition, \(\Phi_t^\ast(f)=f\circ\Phi_t\), so
		$
		\Phi_t^\ast(x_j)=e^{\mu_jt}x_j$, for $j=1,\ldots,n$. Since \(\rho\in\Aut_{\Bbbk}(A_n)\), we have
		\[
		(\rho\circ\Phi_t^\ast)(x_i)
		=\rho(e^{\mu_i t}x_i)
		=e^{\mu_i t}\lambda_i x^{m_i}.
		\]
		On the other hand, \(\Phi_t^\ast\) is a
		\(\Bbbk\)-automorphism, and hence
		\[
		\begin{aligned}
			(\Phi_t^\ast\circ\rho)(x_i)
			&=\Phi_t^\ast\left(
			\lambda_i x_1^{m_{1i}}\cdots x_n^{m_{ni}}
			\right)\\
			&=\lambda_i
			\left(e^{\mu_1t}x_1\right)^{m_{1i}}
			\cdots
			\left(e^{\mu_nt}x_n\right)^{m_{ni}}\\
			&=\lambda_i
			e^{(\mu_1m_{1i}+\cdots+\mu_nm_{ni})t}
			x_1^{m_{1i}}\cdots x_n^{m_{ni}}\\
			&=\lambda_i e^{(\mu^Tm_i)t}x^{m_i}.
		\end{aligned}
		\]
		Therefore,
		\[
		\rho\circ\Phi_t^\ast=\Phi_t^\ast\circ\rho
		\quad\text{for every }t\in\Bbbk
		\]
		if and only if
		\[
		e^{\mu_i t}=e^{(\mu^Tm_i)t}
		\quad\text{for every }t\in\Bbbk
		\text{ and every }i.
		\]
		Differentiating these identities at \(t=0\) gives
		\(\mu_i=\mu^Tm_i\) for every \(i\). Conversely, these
		equalities imply the exponential identities above.
		As the \(m_i\) are the columns of \(M_\rho\), this is
		equivalent to
		\[
		\mu^TM_\rho=\mu^T.
		\]
		This proves the equivalence between (2) and (3).
		
		Finally, \(F_\rho^\ast=\rho\), and pullback reverses
		the order of composition. Thus
		\[
		(\Phi_t\circ F_\rho)^\ast
		=\rho\circ\Phi_t^\ast,
		\qquad
		(F_\rho\circ\Phi_t)^\ast
		=\Phi_t^\ast\circ\rho.
		\]
		Since morphisms of affine varieties are determined by
		their pullbacks on coordinate rings, (3) and (4)
		are equivalent.
	\end{proof}

	The explicit automorphisms described above therefore give all algebraic
	symmetries commuting with this flow. The correspondence
	\(\rho\mapsto F_\rho\) reverses composition; if a group isomorphism
	is desired, it is given by \(\rho\mapsto F_{\rho^{-1}}\).
	
	The lattice \(L_\mu\) also describes the first integrals of the system.
	
	\begin{prop}\label{prop:first-integrals}
		For \(\ell\in\mathbb Z^n\), the Laurent monomial \(x^\ell\) is
		constant along the flow if and only if \(\ell\in L_\mu\). Moreover,
		\[
		A_n^{D_\mu}:=\ker D_\mu
		=\bigoplus_{\ell\in L_\mu}\Bbbk x^\ell.
		\]
		If \(v_1,\ldots,v_s\) is a basis of \(L_\mu\), where \(s=n-r\), then
		\[
		A_n^{D_\mu}
		=\Bbbk[x^{v_1},x^{-v_1},\ldots,x^{v_s},x^{-v_s}]
		\simeq\Bbbk[t_1^{\pm1},\ldots,t_s^{\pm1}].
		\]
		For \(s=0\), this algebra is understood to be \(\Bbbk\).
	\end{prop}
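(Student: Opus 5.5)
The plan is to work entirely with the monomial basis and the formula \(D_\mu(x^\alpha)=(\mu^T\alpha)x^\alpha\) from Section~1.

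\emph{First integrals.} By definition, \(x^\ell\) is constant along the flow when \(\Phi_t^\ast(x^\ell)=x^\ell\) for every \(t\in\Bbbk\). As in the proof of Proposition~\ref{prop:flow-isotropy}, we have \(\Phi_t^\ast(x^\ell)=e^{(\mu^T\ell)t}x^\ell\). This equals \(x^\ell\) for all \(t\) if and only if \(e^{(\mu^T\ell)t}=1\) for all \(t\). Differentiating at \(t=0\) shows this is equivalent to \(\mu^T\ell=0\), that is, \(\ell\in L_\mu\). The same condition is equivalent to \(D_\mu(x^\ell)=0\).

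\emph{Kernel of \(D_\mu\).} Write \(f=\sum_\alpha c_\alpha x^\alpha\) with finite support. Then \(D_\mu f=\sum_\alpha(\mu^T\alpha)c_\alpha x^\alpha\). Since the Laurent monomials form a basis of \(A_n\), \(D_\mu f=0\) holds exactly when \(c_\alpha=0\) whenever \(\mu^T\alpha\neq0\). Equivalently, the support of \(f\) lies in \(L_\mu\). This gives \(\ker D_\mu=\bigoplus_{\ell\in L_\mu}\Bbbk x^\ell\). The sum is direct because distinct monomials are linearly independent.

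\emph{Generators and the isomorphism.} Take the basis \(v_1,\ldots,v_s\) of \(L_\mu\) from Lemma~\ref{lem:adapted-basis}. Each \(\ell\in L_\mu\) can be written as \(\ell=\sum_j b_jv_j\) with \(b_j\in\mathbb Z\), so \(x^\ell=\prod_j(x^{v_j})^{b_j}\). Hence \(\bigoplus_{\ell\in L_\mu}\Bbbk x^\ell=\Bbbk[x^{\pm v_1},\ldots,x^{\pm v_s}]\). For the isomorphism, define \(\psi:\Bbbk[t_1^{\pm1},\ldots,t_s^{\pm1}]\to A_n^{D_\mu}\) by \(t_j\mapsto x^{v_j}\). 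It sends the monomial \(t^b\) to \(x^{\sum_j b_jv_j}\), and it is surjective by the previous sentence.

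\emph{Main obstacle: injectivity of \(\psi\).} The map \(b\mapsto\sum_j b_jv_j\) is injective because the \(v_j\) are \(\mathbb Z\)-linearly independent. Therefore \(\psi\) sends distinct monomials \(t^b\) to distinct Laurent monomials of \(A_n\). These are linearly independent, so \(\psi\) is injective. When \(s=0\), we have \(L_\mu=\{0\}\) and the kernel is \(\Bbbk\), consistent with the stated convention.
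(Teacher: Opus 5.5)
Your proposal is correct and follows essentially the same route as the paper: the formula $\Phi_t^\ast(x^\ell)=e^{(\mu^T\ell)t}x^\ell$, the monomial eigenbasis for describing $\ker D_\mu$, and injectivity of $b\mapsto\sum_j b_jv_j$ for the isomorphism $t_j\mapsto x^{v_j}$. One small point: your argument works for an arbitrary $\mathbb Z$-basis of $L_\mu$, as the statement requires, so Lemma~\ref{lem:adapted-basis} is needed only to identify $s=n-r$.
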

	
	\begin{proof}
		We have
		\[
		\Phi_t^\ast(x^\ell)=e^{(\mu^T\ell)t}x^\ell,
		\qquad D_\mu(x^\ell)=(\mu^T\ell)x^\ell.
		\]
		Thus \(x^\ell\) is constant along every trajectory exactly when
		\(\mu^T\ell=0\). The description of the kernel follows because
		Laurent monomials form a basis of eigenvectors for \(D_\mu\).
		Every \(\ell\in L_\mu\) is an integral combination of the \(v_j\),
		which gives the displayed generators. Finally, the map
		\(t_j\mapsto x^{v_j}\) is injective because distinct integer
		combinations of the \(v_j\) give distinct Laurent monomials.
	\end{proof}
	
	Although stated here in connection with flows, the description of
	\(\ker D_\mu\) is valid over any field of characteristic zero.
	When \(r=n\), there are no nonzero integer relations among the weights:
	\[
	L_\mu=\{0\},\qquad A_n^{D_\mu}=\Bbbk.
	\]
	In this case, the only algebraic automorphisms commuting with the flow
	are the diagonal scalings. More generally, the lattice \(L_\mu\)
	determines both the Laurent monomial first integrals and the allowable
	changes in the exponent vectors of commuting automorphisms.
	
	\bibliographystyle{alpha}
	\bibliography{biblio}
\end{document}